\newcommand{\A}{{\mathcal A}}
\newcommand{\bdry}[1]{\partial #1}
\newcommand{\eps}{\varepsilon}
\newcommand{\incl}{\hookrightarrow}
\newcommand{\N}{\mathbb N}
\newcommand{\norm}[2][]{\left\|#2\right\|_{#1}}
\newcommand{\PS}[1]{$(\text{PS})_{#1}$}
\newcommand{\pnorm}[2][]{\if #1'' \left|#2\right|_p \else \left|#2\right|_{#1} \fi}
\newcommand{\R}{\mathbb R}
\newcommand{\RP}{\R \text{P}}
\newcommand{\set}[1]{\left\{#1\right\}}
\newcommand{\vol}[1]{\left|#1\right|}
\newcommand{\Z}{\mathbb Z}
\newenvironment{enumroman}{\begin{enumerate}

}{\end{enumerate}}
\newenvironment{properties}[1]{\begin{enumerate}

}{\end{enumerate}}
\newtheorem{corollary}{Corollary}[section]
\newtheorem{theorem}[corollary]{Theorem}
\theoremstyle{remark}
\numberwithin{equation}{section}
\title[Elliptic problems with critical exponential growth]{Multiplicity results for elliptic problems with critical exponential growth}
\author{Kanishka Perera}
\address{Kanishka Perera: Department of Mathematics, Florida Institute of Technology, 150 W University Blvd, Melbourne, FL 32901-6975, USA}
\email{kperera@fit.edu}
\date{}
\thanks{{\em MSC2010:} Primary 35J20, Secondary 35B33, 58E05
\newline \indent {\em Key Words and Phrases:} elliptic problems, critical exponential growth, arbitrarily many solutions, abstract critical point theory}
\begin{document}

\begin{abstract}
We prove new multiplicity results for some elliptic problems with critical exponential growth. More specifically, we show that the problems considered here have arbitrarily many solutions for all sufficiently large values of a certain parameter $\mu > 0$. In particular, the number of solutions goes to infinity as $\mu \to \infty$. The proof is based on an abstract critical point theorem.
\end{abstract}

\maketitle

\section{Introduction}

Let $\Omega$ be a bounded domain in $\R^2$ and consider the problem
\begin{equation} \label{1}
\left\{\begin{aligned}
- \Delta u & = h(u)\, e^{\alpha_0 u^2} && \text{in } \Omega\\[10pt]
u & = 0 && \text{on } \bdry{\Omega},
\end{aligned}\right.
\end{equation}
where $\alpha_0 > 0$ and $h$ is a continuous function on $\R$ satisfying $h(0) = 0$ and
\begin{equation} \label{2}
\lim_{|t| \to \infty}\, \frac{h(t)}{e^{\alpha t^2}} = 0
\end{equation}
for all $\alpha > 0$. This problem is critical with respect to the Trudinger-Moser inequality
\[
\sup_{u \in H^1_0(\Omega),\, \norm{u} \le 1}\, \int_\Omega e^{4 \pi u^2}\, dx < \infty
\]
(see Trudinger \cite{MR0216286} and Moser \cite{MR0301504}), which makes finding nontrivial solutions challenging. Set $f(t) = h(t)\, e^{\alpha_0 t^2}$ and let $F(t) = \int_0^t f(s)\, ds$ be the primitive of $f$. Assume that
\begin{properties}{A}
\item \label{A1} $0 < 2F(t) \le t f(t)$ for all $t \in \R \setminus \set{0}$,
\item \label{A2} there exist $t_0, M > 0$ such that $F(t) \le M\, |f(t)|$ for $|t| \ge t_0$.
\end{properties}
Let $\lambda_1 > 0$ be the first Dirichlet eigenvalue of the Laplacian in $\Omega$ and let $d$ be the radius of the largest open ball contained in $\Omega$. The following theorem was proved in de Figueiredo et al.\! \cite{MR1386960,MR1399846} and improves an earlier result of Adimurthi \cite{MR1079983}.

\begin{theorem} \label{Theorem 1}
Assume \eqref{2}, \ref{A1}, and \ref{A2}. If
\begin{equation} \label{11}
\limsup_{t \to 0}\, \frac{2F(t)}{t^2} < \lambda_1
\end{equation}
and
\begin{equation} \label{16}
\liminf_{|t| \to \infty}\, t h(t) > \frac{2}{\alpha_0 d^2},
\end{equation}
then problem \eqref{1} has a nontrivial solution.
\end{theorem}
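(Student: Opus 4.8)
The plan is to find a nontrivial critical point of the energy functional
\[
I(u) = \frac{1}{2} \int_\Omega |\nabla u|^2\, dx - \int_\Omega F(u)\, dx, \qquad u \in H^1_0(\Omega),
\]
via the mountain pass theorem. First I would verify the mountain pass geometry. The hypothesis \eqref{11} gives $2F(t) \le (\lambda_1 - \eps)\, t^2 + C_\eps\, |t|^q$ near $0$ for some $q > 2$ (absorbing the higher-order critical growth into the power term using \eqref{2} and the Poincaré inequality $\lambda_1 \norm[2]{u}^2 \le \norm{u}^2$), which yields $I(u) \ge \rho > 0$ on a small sphere $\norm{u} = r$. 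On the other hand, \ref{A1} forces superquadratic growth $F(t) \ge c\, |t|^2$ away from the origin (integrating the Ambrosetti--Rabinowitz-type inequality $2F \le tf$), so $I(tu) \to -\infty$ as $t \to \infty$ for any fixed $u \not\equiv 0$, giving a point $e$ with $I(e) < 0$.

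Next I would analyze the compactness of Palais--Smale sequences at the mountain pass level $c$. Condition \ref{A1} is the Ambrosetti--Rabinowitz condition, which guarantees that any \PS{c} sequence is bounded in $H^1_0(\Omega)$; passing to a weakly convergent subsequence $u_n \rightharpoonup u$, the limit $u$ is automatically a weak solution. The serious difficulty, and the crux of the whole argument, is that \emph{boundedness alone does not give strong convergence} because of the critical exponential nonlinearity: the Trudinger--Moser embedding $H^1_0(\Omega) \incl L^p$ with the exponential weight $e^{4\pi u^2}$ is continuous but not compact, so the nonlinear term $\int_\Omega f(u_n)\, \varphi\, dx$ need not converge to $\int_\Omega f(u)\, \varphi\, dx$ when $\norm{u_n}^2$ approaches the critical threshold. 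Thus I must establish a threshold below which compactness is restored.

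The key technical step is therefore to prove a quantitative bound $c < 2\pi/\alpha_0$ on the mountain pass level, below which \PS{c} sequences converge strongly. The standard argument runs as follows: if $u_n \rightharpoonup u$ with $\norm{u_n}^2 \to 2c/\,$(in the normalization where the Trudinger--Moser constant is $4\pi$, the sharp condition is $\limsup \norm{u_n}^2 < 4\pi/\alpha_0$), then writing $v_n = u_n/\norm{u_n}$ one uses the Lions concentration-compactness form of the Trudinger--Moser inequality to show the nonlinear terms $\int_\Omega f(u_n)\, u_n\, dx$ remain equi-integrable, whence $u_n \to u$ strongly and $u$ is a nontrivial solution. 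Condition \ref{A2} is exactly what is needed here to control $\int_\Omega F(u_n)\, dx$ by $\int_\Omega |f(u_n)|\, dx$ and close the estimate. To verify the level bound $c < 2\pi/\alpha_0$ I would test $I$ along a family of \emph{Moser functions} (truncated and normalized logarithmic concentrations) supported in the largest inscribed ball of radius $d$; the growth assumption \eqref{16}, $\liminf_{|t|\to\infty} t\, h(t) > 2/(\alpha_0 d^2)$, is precisely calibrated so that the leading correction term in the expansion of $\max_{s \ge 0} I(s\, w_n)$ drops the level strictly below the critical value $2\pi/\alpha_0$. This Moser-function computation, matching the constant $2/(\alpha_0 d^2)$ against the capacity of the inscribed ball, is the main obstacle and the technical heart of the proof; the rest follows by combining it with the recovered compactness and the mountain pass theorem.
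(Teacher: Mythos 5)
The paper offers no proof of Theorem \ref{Theorem 1} at all: it is quoted from de Figueiredo, Miyagaki, and Ruf \cite{MR1386960,MR1399846}, so your attempt can only be measured against that cited proof. At the level of strategy you have reconstructed it correctly: mountain pass geometry from \eqref{11} and \eqref{2}, boundedness of Palais--Smale sequences from \ref{A1}, a Moser-function computation on the largest inscribed ball of radius $d$ showing that \eqref{16}, with its constant $2/(\alpha_0 d^2)$, pushes the minimax level strictly below $2\pi/\alpha_0$, and recovery of compactness below that threshold. That is exactly the architecture of the cited argument, and $c^\ast = 2\pi/\alpha_0$ is the same compactness threshold the present paper relies on for Theorems \ref{Theorem 2} and \ref{Theorem 4}.

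Two steps as written, however, do not hold up. First, the valley: you claim that integrating the inequality in \ref{A1} gives \emph{superquadratic} growth of $F$ and hence $I(tu) \to -\infty$. Integrating $2F(t) \le t f(t) = t F'(t)$ gives only $F(t) \ge F(t_0)\,(t/t_0)^2$ for $t \ge t_0$ --- an exactly quadratic lower bound whose constant $F(t_0)/t_0^2$ may be arbitrarily small. Then $I(tu) \le \tfrac{1}{2}\, t^2 \norm{u}^2 - c\, t^2 \norm[2]{u}^2 + O(1)$, which need not tend to $-\infty$ when $c < \tfrac{1}{2}\norm{u}^2/\norm[2]{u}^2$, so the claimed divergence does not follow from \ref{A1}. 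The correct source of the valley is \ref{A2}: for $|t| \ge t_0$, $F(t) \le M |f(t)| = M\, |F'(t)|$ integrates to the exponential bound $F(t) \ge F(t_0)\, e^{(|t| - t_0)/M}$, which is precisely inequality \eqref{8} of the present paper, used there for exactly this purpose in the proof of Theorem \ref{Theorem 2}. Second, ``the limit $u$ is automatically a weak solution'' is not automatic under critical exponential growth: since $f(u_n)$ is bounded only in $L^1$, one needs a separate lemma (convergence $f(u_n) \to f(u)$ in $L^1(\Omega)$, obtained in the cited paper from the uniform bound on $\int_\Omega f(u_n)\, u_n\, dx$ that \ref{A1} provides), and this is one of the delicate points of the de Figueiredo--Miyagaki--Ruf argument. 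Both gaps are repairable with the ingredients you already have on the table, but as written the mountain-pass geometry step fails.
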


In particular, this theorem has the following corollary for the problem
\begin{equation} \label{3}
\left\{\begin{aligned}
- \Delta u & = \mu\, h(u)\, e^{\alpha_0 u^2} && \text{in } \Omega\\[10pt]
u & = 0 && \text{on } \bdry{\Omega},
\end{aligned}\right.
\end{equation}
where $\mu > 0$ is a parameter.

\begin{corollary} \label{Corollary}
Assume \eqref{2}, \ref{A1}, and \ref{A2}. If
\begin{equation} \label{5}
\lim_{t \to 0}\, \frac{2F(t)}{t^2} = 0
\end{equation}
and
\begin{equation} \label{4}
\liminf_{|t| \to \infty}\, t h(t) > 0,
\end{equation}
then problem \eqref{3} has a nontrivial solution for all sufficiently large $\mu > 0$.
\end{corollary}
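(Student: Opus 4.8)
The plan is to derive Corollary~\ref{Corollary} directly from Theorem~\ref{Theorem 1} by applying the latter to the rescaled problem~\eqref{3}. The key observation is that problem~\eqref{3} is exactly problem~\eqref{1} with $h$ replaced by $\mu\, h$, and hence with $f$ replaced by $\mu f$ and $F$ replaced by $\mu F$. Since the hypotheses \ref{A1} and \ref{A2} are each homogeneous of degree one in $f$ (and in $F$), multiplying by the positive constant $\mu$ leaves them intact; likewise \eqref{2} is unaffected. Thus the only conditions I need to verify are the two quantitative inequalities \eqref{11} and \eqref{16} for the scaled nonlinearity, where the threshold constants $\lambda_1$ and $2/(\alpha_0 d^2)$ do \emph{not} change with $\mu$.

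First I would check the behavior at the origin. Replacing $F$ by $\mu F$, condition \eqref{11} becomes
\[
\limsup_{t \to 0}\, \frac{2\mu F(t)}{t^2} = \mu \lim_{t \to 0}\, \frac{2F(t)}{t^2} = \mu \cdot 0 = 0 < \lambda_1,
\]
using hypothesis \eqref{5}. This holds for \emph{every} $\mu > 0$, so the subcriticality-at-zero condition is automatic.

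Next I would handle the behavior at infinity, which is where the largeness of $\mu$ enters. Replacing $h$ by $\mu h$, condition \eqref{16} becomes
\[
\liminf_{|t| \to \infty}\, t\, (\mu h(t)) = \mu \liminf_{|t| \to \infty}\, t h(t) =: \mu\, L.
\]
By hypothesis \eqref{4} the number $L = \liminf_{|t|\to\infty} t h(t)$ is strictly positive (possibly $+\infty$, in which case the inequality is trivial for all $\mu$). Since $L > 0$ and the threshold $2/(\alpha_0 d^2)$ is a fixed finite positive constant, one has $\mu L > 2/(\alpha_0 d^2)$ precisely when $\mu > 2/(\alpha_0 d^2 L)$. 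Hence for all $\mu$ larger than this explicit bound, condition \eqref{16} holds for the scaled problem.

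Having verified that all hypotheses of Theorem~\ref{Theorem 1} hold for the nonlinearity $\mu h$ once $\mu$ is sufficiently large, I would conclude that problem~\eqref{3} has a nontrivial solution for all such $\mu$. I do not anticipate a genuine obstacle here: the corollary is a routine specialization in which the weak hypotheses \eqref{5} and \eqref{4} have been designed precisely so that the parameter $\mu$ can be used to push the limit-at-infinity quantity past the fixed geometric threshold while keeping the limit-at-origin quantity below $\lambda_1$. The only point requiring a word of care is the degenerate case $L = +\infty$, which should be noted but causes no difficulty.
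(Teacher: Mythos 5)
Your proposal is correct and coincides with the paper's own (implicit) argument: the paper states this corollary as an immediate consequence of Theorem \ref{Theorem 1}, obtained exactly as you do by replacing $h$ with $\mu h$, observing that \eqref{2}, \ref{A1}, and \ref{A2} are invariant under this scaling, that \eqref{5} makes \eqref{11} hold for every $\mu > 0$, and that \eqref{4} makes \eqref{16} hold once $\mu > 2/(\alpha_0 d^2 L)$ where $L = \liminf_{|t| \to \infty} t h(t)$. Your attention to the case $L = +\infty$ is a reasonable extra touch and causes no issue.
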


Various extensions of these results and related results can be found in \cite{MR1093706,MR2772124,MR3130547,MR2286391,MR4263694,MR3145918,MR1392090} and in their references. However, two basic questions concerning problem \eqref{3} have remained largely open over the last three decades:
\begin{enumerate}
\item Can the assumption \eqref{4} be removed?
\item Are there multiple nontrivial solutions of problem \eqref{3} for large $\mu$ when $h$ is an odd function?
\end{enumerate}
In this note we give positive answers to both of these questions. We will show that, even without the assumption \eqref{4}, problem \eqref{3} has arbitrarily many solutions for all sufficiently large $\mu > 0$ when $h$ is odd. Our main result is the following theorem.

\begin{theorem} \label{Theorem 2}
Assume \eqref{2}, \ref{A1}, \ref{A2}, and $h(- t) = - h(t)$ for all $t \in \R$. If \eqref{5} holds, then given any $m \in \N$, there exists $\mu_m > 0$ such that problem \eqref{3} has $m$ distinct pairs of nontrivial solutions $\pm u_1,\dots,\pm u_m$ for all $\mu > \mu_m$. In particular, the number of solutions of problem \eqref{3} goes to infinity as $\mu \to \infty$.
\end{theorem}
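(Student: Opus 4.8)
The plan is to realize the $m$ pairs of solutions as critical points of the even energy functional
\[
E_\mu(u)=\tfrac12\,\norm{u}^2-\mu\int_\Omega F(u)\,dx,\qquad u\in H^1_0(\Omega),
\]
where $\norm{u}^2=\int_\Omega\vol{\nabla u}^2\,dx$. Since $h$ is odd, $f$ is odd and $F$ is even, so $E_\mu$ is an even $C^1$ functional with $E_\mu(0)=0$, which is exactly the $\Z_2$-symmetry needed for an equivariant min-max. I would first record two structural facts forced by the hypotheses. By \ref{A1} the function $G(t):=F(t)/t^2$ is nondecreasing on $(0,\infty)$, since $G'(t)=(tf(t)-2F(t))/t^3\ge0$; by \eqref{5} it tends to $0$ at the origin; and by \ref{A2} one has $f(t)\ge F(t)/M$ for large $t$, so $F$ grows at least exponentially and $G(t)\to\infty$. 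Thus $F$ is genuinely superquadratic with $F(t)=o(t^2)$ at $0$, and $F>0$ off the origin.

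The essential compactness input, which I would take from the analysis of de Figueiredo et al., is that there is a threshold $c^\ast=2\pi/\alpha_0$, \emph{independent of $\mu$}, such that $E_\mu$ satisfies \PS{c} for every $c\in(0,c^\ast)$. The outline is standard: \ref{A1} and \ref{A2} force Palais--Smale sequences to be bounded; if $E_\mu(u_n)\to c$ and $E_\mu'(u_n)\to0$, then writing $u_n\rightharpoonup u$ and $v_n=u_n-u$, the weak limit $u$ is a critical point with $E_\mu(u)\ge0$ and $\norm{v_n}^2\to2(c-E_\mu(u))\le2c$; when $c<c^\ast$ this keeps $\alpha_0\limsup\norm{v_n}^2<4\pi$, so the Trudinger--Moser inequality yields a uniform $L^{1+\eps}$ bound on $e^{\alpha_0 u_n^2}$ and hence $v_n\to0$ strongly. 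The point to stress is that $\mu$ enters only as a multiplier while the exponent $\alpha_0$ is fixed, so $c^\ast$ does not depend on $\mu$.

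The geometry has two parts. For the lower geometry, from $F(t)=o(t^2)$, \eqref{2} and Trudinger--Moser one gets $\int_\Omega F(u)\le\eps\,\norm{u}^2+C_\eps\norm{u}^3$ for $\norm{u}\le1$; choosing $\eps$ and then $\rho=\rho_\mu$ small gives $\inf_{\norm{u}=\rho_\mu}E_\mu=:\beta_\mu>0$. Only positivity matters here (both $\rho_\mu,\beta_\mu\to0$ as $\mu\to\infty$), and it forces the min-max values to be positive, hence the solutions nontrivial. The crux is the \emph{upper} geometry: capping the energy on a fixed $m$-dimensional symmetric subspace below the fixed threshold $c^\ast$. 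Fix an $m$-dimensional subspace $W_m\subset H^1_0(\Omega)$ and set $\gamma_m=\min\set{\norm[2]{u}^2:u\in W_m,\ \norm{u}=1}>0$. Writing $u=sv$ with $\norm{v}=1$, using $F(sv)=G(sv)\,s^2v^2$ and the monotonicity of $G$, for a small fixed $\eta>0$ (chosen so $\eta^2\vol{\Omega}\le\gamma_m/2$),
\[
\int_\Omega F(sv)\,dx\ge s^2\,G(s\eta)\int_{\set{\vol{v}\ge\eta}}v^2\,dx\ge\tfrac{\gamma_m}{2}\,s^2\,G(s\eta),
\]
so $E_\mu(sv)\le s^2\bigl(\tfrac12-\tfrac{\mu\gamma_m}{2}G(s\eta)\bigr)$. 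Since $G$ increases from $0$ to $\infty$, the cut-off $s_\mu$ defined by $G(s_\mu\eta)=1/(\mu\gamma_m)$ satisfies $s_\mu\to0$ as $\mu\to\infty$; then $E_\mu(sv)\le0$ for $s\ge s_\mu$ and $E_\mu(sv)\le s^2/2$ for $s\le s_\mu$, whence $\sup_{W_m}E_\mu\le s_\mu^2/2\to0$, so $\sup_{W_m}E_\mu<c^\ast$ for all $\mu$ beyond some $\mu_m$.

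With these ingredients I would invoke the abstract even critical point theorem. The subspace $W_m$ has genus (equivalently $\Z_2$-cohomological index) $m$, $E_\mu\to-\infty$ on $W_m$ by superquadraticity, and $W_m$ links the sphere $\set{\norm{u}=\rho_\mu}$ on which $E_\mu\ge\beta_\mu>0$; the associated pseudo-index min-max values $0<\beta_\mu\le c_1\le\cdots\le c_m\le\sup_{W_m}E_\mu<c^\ast$ all lie in the range where \PS{c} holds, so each is a critical value, and any coincidence only increases the index of the critical set. This yields $m$ distinct pairs $\pm u_1,\dots,\pm u_m$ of nontrivial solutions for $\mu>\mu_m$, with $\mu_m$ increasing in $m$, so the count diverges as $\mu\to\infty$. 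The main obstacle is exactly the tension highlighted above: the compactness threshold $c^\ast$ is fixed while the natural lower-geometry scale $\rho_\mu$ degenerates as $\mu\to\infty$, so everything hinges on pushing the energy cap on $W_m$ below the fixed $c^\ast$ uniformly. That is precisely what the monotonicity of $F(t)/t^2$ (from \ref{A1}), its vanishing at $0$ (from \eqref{5}), and its blow-up (from \ref{A2}) deliver, and it is also what allows \eqref{4} to be dropped.
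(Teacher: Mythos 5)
Your proposal is correct and takes essentially the same route as the paper: the same even functional with the same $\mu$-independent threshold $c^\ast = 2\pi/\alpha_0$ for the \PS{c} condition, a strict local minimum at the origin from \eqref{5}, an energy cap on a fixed $m$-dimensional symmetric set driven to $0$ as $\mu \to \infty$, and an abstract $\Z_2$ pseudo-index theorem (the paper's Theorem \ref{Theorem 3}, proved in \cite{Pe23}) applied at levels below $c^\ast$. The differences are minor: you obtain the cap from the monotonicity of $F(t)/t^2$ (via \ref{A1}) and a crossover scale $s_\mu \to 0$ on an arbitrary $m$-dimensional subspace, while the paper works on the eigenfunction span $N_m$ and uses the bound $F(t) \ge c_\eps\, |t|^3$ for $|t| \ge \eps$ together with an explicit maximization; the one thing to drop is your aside that genus and cohomological index are ``equivalent'' here, since Theorem \ref{Theorem 3} rests on the piercing property of the index, which the genus does not share.
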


A model problem is
\begin{equation} \label{6}
\left\{\begin{aligned}
- \Delta u & = \mu\, |u|^{r - 2}\, u\, e^{\alpha_0 u^2 + \beta |u|} && \text{in } \Omega\\[10pt]
u & = 0 && \text{on } \bdry{\Omega},
\end{aligned}\right.
\end{equation}
where $\alpha_0 > 0$, $r > 2$, $\beta \in \R$, and $\mu > 0$. The corresponding function $h(t) = |t|^{r - 2}\, t\, e^{\beta |t|}$ satisfies \eqref{2} and is odd. The assumptions \ref{A1} and \ref{A2} are easily verified if $\beta \ge 0$, or $\beta < 0$ and $\beta^2 \le 8 \alpha_0\, (r - 2)$. However,
\[
\lim_{|t| \to \infty}\, t h(t) = \begin{cases}
+ \infty & \text{if } \beta \ge 0\\[5pt]
0 & \text{if } \beta < 0,
\end{cases}
\]
so Theorem \ref{Theorem 1} gives a nontrivial solution for all $\mu > 0$ when $\beta \ge 0$, but neither Theorem \ref{Theorem 1} nor Corollary \ref{Corollary} gives a nontrivial solution for any $\mu > 0$ when $\beta < 0$. In contrast, Theorem \ref{Theorem 2} gives arbitrarily many solutions for all sufficiently large $\mu > 0$ when $\beta \ge 0$ and when $\beta < 0$ and $\beta^2 \le 8 \alpha_0\, (r - 2)$.

\begin{theorem}
If $r > 2$, then given any $m \in \N$, there exists $\mu_m > 0$ such that problem \eqref{6} has $m$ distinct pairs of nontrivial solutions $\pm u_1,\dots,\pm u_m$ for all $\mu > \mu_m$ in each of the following cases:
\begin{enumroman}
\item $\beta \ge 0$,
\item $\beta < 0$ and $\beta^2 \le 8 \alpha_0\, (r - 2)$.
\end{enumroman}
In particular, the number of solutions goes to infinity as $\mu \to \infty$ in both cases.
\end{theorem}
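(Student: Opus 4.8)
The plan is to verify that the function $h(t) = |t|^{r - 2}\, t\, e^{\beta |t|}$ satisfies all the hypotheses of Theorem \ref{Theorem 2}, after which the conclusion follows immediately in both cases (i) and (ii). Oddness of $h$ is clear, and since $\alpha t^2$ dominates $\beta |t|$ as $|t| \to \infty$ for every $\alpha > 0$, condition \eqref{2} holds. For \eqref{5}, note that $f(t) = |t|^{r - 2}\, t\, e^{\alpha_0 t^2 + \beta |t|}$ behaves like $|t|^{r - 2}\, t$ near the origin, so $F(t) \sim |t|^r/r$ and hence $2F(t)/t^2 \sim (2/r)\, |t|^{r - 2} \to 0$ because $r > 2$. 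Thus the only substantive work is to check \ref{A1} and \ref{A2}, and this is precisely where the two cases on $\beta$ enter.

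For \ref{A1} I would work on $(0, \infty)$ and use the evenness of $F$ and of $t \mapsto t f(t)$ to extend the inequalities to negative $t$. Since $f(t) > 0$ for $t > 0$, we have $2 F(t) > 0$. To obtain $2 F(t) \le t f(t)$, set $\phi(t) = t f(t) - 2 F(t)$, so that $\phi(0) = 0$ and
\[
\phi'(t) = t f'(t) - f(t) = t^{r - 1}\, e^{\alpha_0 t^2 + \beta t}\, \bigl[(r - 2) + 2 \alpha_0 t^2 + \beta t\bigr]
\]
for $t > 0$. The sign of $\phi'$ is governed by the quadratic $q(t) = 2 \alpha_0 t^2 + \beta t + (r - 2)$. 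When $\beta \ge 0$ every term is nonnegative, so $q \ge 0$; when $\beta < 0$ the hypothesis $\beta^2 \le 8 \alpha_0\, (r - 2)$ makes the discriminant $\beta^2 - 8 \alpha_0\, (r - 2)$ nonpositive, so again $q \ge 0$. In either case $\phi' \ge 0$ on $(0, \infty)$, whence $\phi \ge 0$, giving \ref{A1}. This quadratic sign condition is exactly the point at which the bound on $\beta$ is needed, and I expect it to be the only genuinely delicate step.

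For \ref{A2} I would establish the stronger fact that $F(t)/f(t) \to 0$ as $t \to +\infty$, which yields \ref{A2} with any fixed $M > 0$ and a suitable $t_0$ (using the evenness of $F$ and the identity $|f(t)| = f(|t|)$ to cover all $|t| \ge t_0$). A single application of L'Hôpital's rule to $F(t)/f(t) = \bigl(\int_0^t s^{r - 1} e^{\alpha_0 s^2 + \beta s}\, ds\bigr)\big/\bigl(t^{r - 1} e^{\alpha_0 t^2 + \beta t}\bigr)$ reduces the ratio to $t/\bigl[(r - 1) + 2 \alpha_0 t^2 + \beta t\bigr]$, which tends to $0$. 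Having verified \eqref{2}, \ref{A1}, \ref{A2}, oddness, and \eqref{5}, I would then simply invoke Theorem \ref{Theorem 2} to conclude that for each $m \in \N$ there exists $\mu_m > 0$ such that problem \eqref{6} has $m$ distinct pairs $\pm u_1, \dots, \pm u_m$ of nontrivial solutions for all $\mu > \mu_m$, and hence that the number of solutions goes to infinity as $\mu \to \infty$ in both cases.
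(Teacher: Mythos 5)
Your proposal is correct and follows exactly the paper's route: the paper proves this theorem by checking that $h(t) = |t|^{r-2}\, t\, e^{\beta|t|}$ satisfies \eqref{2}, \eqref{5}, oddness, \ref{A1}, and \ref{A2} (the last two only asserted as ``easily verified'' under the stated conditions on $\beta$) and then invoking Theorem \ref{Theorem 2}. Your verification of \ref{A1} via the sign of the quadratic $2\alpha_0 t^2 + \beta t + (r-2)$ and of \ref{A2} via L'H\^opital supplies precisely the details the paper omits, and both computations are sound.
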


We also prove the following variant of Theorem \ref{Theorem 2} for problem \eqref{1}, which assumes \eqref{11} instead of \eqref{5} and therefore can be applied to the limiting case $r = 2$ of the model problem \eqref{6}. Let $\lambda_1 < \lambda_2 \le \lambda_3 \le \cdots$ be the sequence of Dirichlet eigenvalues of $- \Delta$ in $\Omega$, repeated according to multiplicity.

\begin{theorem} \label{Theorem 4}
Assume \eqref{2}, \ref{A1}, \ref{A2}, and $h(- t) = - h(t)$ for all $t \in \R$. If \eqref{11} holds and
\begin{equation} \label{12}
F(t) \ge \mu\, |t|^s \quad \forall t \in \R
\end{equation}
for some $s > 2$ and
\begin{equation} \label{13}
\mu > \frac{1}{s} \left[\left(\frac{1}{2} - \frac{1}{s}\right) \frac{\alpha_0}{2 \pi} \vol{\Omega}\right]^{s/2 - 1} \lambda_m^{s/2},
\end{equation}
then problem \eqref{1} has $m$ distinct pairs of nontrivial solutions $\pm u_1,\dots,\pm u_m$.
\end{theorem}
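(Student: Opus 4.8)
The plan is to recast the problem variationally and apply an even (that is, $\Z_2$-equivariant) minimax scheme, exploiting the quantitative gap between the finite-dimensional energy bound \eqref{13} and the Trudinger--Moser compactness threshold to produce the required pairs of critical points. I would work in $W = H^1_0(\Omega)$ with $\norm{u}^2 = \dint_\Omega |\nabla u|^2\, dx$ and set
\[
\Phi(u) = \frac{1}{2}\, \norm{u}^2 - \int_\Omega F(u)\, dx.
\]
By \eqref{2} and the Trudinger--Moser inequality, $\Phi$ is well defined and of class $C^1$ on $W$, its critical points are exactly the weak solutions of \eqref{1}, and since $h$ is odd the primitive $F$ is even, so $\Phi$ is even with $\Phi(0) = 0$. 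Throughout, the relevant compactness threshold will be $c^* := 2\pi/\alpha_0$, which is precisely the level below which concentration is excluded.

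First I would establish the local Palais--Smale condition: $\Phi$ satisfies \PS{c} for every $c < c^*$. Using \ref{A1} one shows any \PS{c} sequence $\seq{u_n}$ is bounded in $W$, hence $u_n \rightharpoonup u$ along a subsequence and $u$ solves \eqref{1}. The delicate point is passing to the limit in $\int_\Omega f(u_n)\, \varphi\, dx$: by \eqref{2} and the Trudinger--Moser inequality this follows once $\limsup_n \alpha_0 \norm{u_n}^2 < 4\pi$, which gives $L^q$-boundedness of $e^{\alpha_0 u_n^2}$ for some $q > 1$ and hence strong convergence of the nonlinear term. Assumptions \ref{A1} and \ref{A2} together with the restriction $c < c^* = 2\pi/\alpha_0$ rule out energy concentration and yield this bound; this is exactly the compactness analysis of de Figueiredo et al.\ \cite{MR1386960,MR1399846}, which I would adapt. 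I expect this to be the main obstacle, as it is where the borderline nature of the critical exponential nonlinearity enters.

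Next I would check the two geometric ingredients. Near the origin, \eqref{11} gives $\eps, \delta > 0$ with $F(t) \le (\lambda_1 - \eps)\, t^2/2$ for $|t| \le \delta$; controlling the contribution of $|t| > \delta$ through \eqref{2} and Trudinger--Moser and using the Poincar\'e inequality $\norm{u}^2 \ge \lambda_1 \norm[2]{u}^2$, one obtains $\rho > 0$ and $\beta \in (0, c^*)$ with $\Phi(u) \ge \beta$ whenever $\norm{u} = \rho$. For the upper estimate, let $V_m = \text{span}\set{\varphi_1,\dots,\varphi_m}$ be spanned by the first $m$ Dirichlet eigenfunctions, taken $L^2$-orthonormal. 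For $u \in V_m$, combining \eqref{12}, the inequality $\norm{u}^2 \le \lambda_m \norm[2]{u}^2$, and $\norm[2]{u}^2 \le \vol{\Omega}^{1 - 2/s}\, \norm[s]{u}^2$ yields
\[
\Phi(u) \le \frac{\lambda_m}{2}\, \vol{\Omega}^{1 - 2/s}\, \norm[s]{u}^2 - \mu\, \norm[s]{u}^s.
\]
Maximizing the right-hand side over $\norm[s]{u} \ge 0$ gives a finite maximum, and a direct computation shows that \eqref{13} is precisely the condition under which this maximum is strictly less than $c^* = 2\pi/\alpha_0$; hence $\sup_{V_m} \Phi < c^*$. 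Superquadraticity of $F$ (again \eqref{12}) also forces $\Phi \to -\infty$ on the finite-dimensional space $V_m$, so the supremum is attained.

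Finally I would feed these data into the even minimax machinery. Since $\Phi$ is even with $\Phi(0) = 0$, satisfies \PS{c} for $c < c^*$, is bounded below by $\beta > 0$ on the sphere $\norm{u} = \rho$, and satisfies $\sup_{V_m}\Phi < c^*$ on the $m$-dimensional symmetric subspace $V_m$, the $\Z_2$-equivariant minimax values
\[
c_k = \inf_{A}\, \sup_{u \in A}\, \Phi(u), \qquad k = 1, \dots, m,
\]
taken over closed symmetric sets $A$ of (Krasnoselskii genus or cohomological) index at least $k$ that link the small sphere inside $V_m$, all lie in $[\beta, \sup_{V_m}\Phi] \subset (0, c^*)$. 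Because \PS{c} holds throughout this range, each $c_k$ is a critical value, and when several coincide the index of the corresponding critical set is large enough to still furnish new solutions; in all cases one obtains at least $m$ distinct pairs $\pm u_1,\dots,\pm u_m$ of nontrivial critical points of $\Phi$, that is, solutions of \eqref{1}. This is the content of the abstract critical point theorem invoked in the statement.
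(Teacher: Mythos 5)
Your proposal is correct and takes essentially the same route as the paper's proof: the same functional and compactness threshold $2\pi/\alpha_0$ (citing de Figueiredo--Miyagaki--Ruf for \PS{c} below it), the same H\"older-plus-maximization estimate on the span of the first $m$ eigenfunctions showing that \eqref{13} is exactly the condition forcing the finite-dimensional maximum below the threshold, and then an appeal to the abstract $\Z_2$-minimax theorem (the paper applies its Theorem \ref{Theorem 3} with $C$ the unit sphere of that subspace). The only imprecision is in your last step: classes defined by plain genus or index alone do not yield the upper bound $c_k \le \sup_{V_m} \Phi$; one needs the pseudo-index/linking formulation relative to the small sphere, which is precisely what the paper's Theorem \ref{Theorem 3} encodes and what your phrase ``link the small sphere'' gestures at.
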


For example, consider the model problem
\begin{equation} \label{14}
\left\{\begin{aligned}
- \Delta u & = \lambda u\, e^{\alpha_0 u^2 + \beta |u|} && \text{in } \Omega\\[10pt]
u & = 0 && \text{on } \bdry{\Omega},
\end{aligned}\right.
\end{equation}
where $\alpha_0 > 0$, $0 < \lambda < \lambda_1$, and $\beta \ge 0$. Theorem \ref{Theorem 1} gives a nontrivial solution of this problem for all $\beta \ge 0$. In contrast, since
\[
\int_0^t \lambda s\, e^{\alpha_0 s^2 + \beta |s|}\, ds \ge \frac{\lambda \beta}{3}\, |t|^3 \quad \forall t \in \R,
\]
Theorem \ref{Theorem 4} gives the following multiplicity result for large $\beta > 0$.

\begin{theorem}
If $0 < \lambda < \lambda_1$ and
\[
\beta > \frac{1}{2 \lambda} \left(\frac{\alpha_0}{3 \pi} \vol{\Omega}\right)^{1/2} \lambda_m^{3/2},
\]
then problem \eqref{14} has $m$ distinct pairs of nontrivial solutions $\pm u_1,\dots,\pm u_m$. In particular, the number of solutions goes to infinity as $\beta \to \infty$.
\end{theorem}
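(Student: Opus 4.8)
The plan is to derive this theorem as a direct application of Theorem~\ref{Theorem 4} to the model problem \eqref{14}, which is exactly problem \eqref{1} with $h(t) = \lambda t\, e^{\beta \vol{t}}$. First I would record the elementary properties of this $h$: it is continuous, $h(0) = 0$, it is odd, and since $h(t)/e^{\alpha t^2} = \lambda t\, e^{\beta \vol{t} - \alpha t^2} \to 0$ as $\vol{t} \to \infty$ for every $\alpha > 0$, it satisfies \eqref{2}. Here $f(t) = \lambda t\, e^{\alpha_0 t^2 + \beta \vol{t}}$ and its primitive $F$ is even.

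Next I would verify \ref{A1} and \ref{A2}. Since $F(t) > 0$ for $t \ne 0$, for \ref{A1} it suffices to check $2 F(t) \le t f(t)$; differentiating $t f(t) - 2 F(t)$ for $t > 0$ gives $\lambda t^2 (2 \alpha_0 t + \beta)\, e^{\alpha_0 t^2 + \beta t} \ge 0$ (using $\beta \ge 0$), and since $t f(t) - 2 F(t)$ vanishes at $t = 0$ it stays nonnegative; the case $t < 0$ then follows from the parity of $f$ and $F$. For \ref{A2} I would write the integrand of $F$ as $\frac{\lambda s}{2 \alpha_0 s + \beta}\, \frac{d}{ds}\, e^{\alpha_0 s^2 + \beta s}$ and use $\lambda s/(2 \alpha_0 s + \beta) \le \lambda/(2 \alpha_0)$ for $s > 0$ to get $F(t) \le \frac{\lambda}{2 \alpha_0}\, e^{\alpha_0 t^2 + \beta t}$, so that $F(t)/\vol{f(t)} \le 1/(2 \alpha_0 \vol{t}) \to 0$ and \ref{A2} holds.

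I would then check \eqref{11}: a Taylor expansion of $F$ at the origin (or L'Hopital's rule) gives $\lim_{t \to 0} 2 F(t)/t^2 = \lambda$, which is $< \lambda_1$ by hypothesis. The inequality supplied just before the statement,
\[
\int_0^t \lambda s\, e^{\alpha_0 s^2 + \beta \vol{s}}\, ds \ge \frac{\lambda \beta}{3}\, \vol{t}^3 \quad \forall t \in \R,
\]
is precisely \eqref{12} with $s = 3$ and $\mu = \lambda \beta/3$, so all hypotheses of Theorem~\ref{Theorem 4} are in place.

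Finally I would substitute $s = 3$ and $\mu = \lambda \beta/3$ into \eqref{13}. With $s = 3$ the constants become $1/s = 1/3$, $1/2 - 1/s = 1/6$, $s/2 - 1 = 1/2$, and $s/2 = 3/2$, so \eqref{13} reads
\[
\frac{\lambda \beta}{3} > \frac{1}{3} \left(\frac{\alpha_0}{12 \pi}\, \vol{\Omega}\right)^{1/2} \lambda_m^{3/2};
\]
using $\left(\frac{\alpha_0}{12 \pi}\, \vol{\Omega}\right)^{1/2} = \frac{1}{2} \left(\frac{\alpha_0}{3 \pi}\, \vol{\Omega}\right)^{1/2}$ this simplifies to exactly the hypothesis $\beta > \frac{1}{2 \lambda} \left(\frac{\alpha_0}{3 \pi}\, \vol{\Omega}\right)^{1/2} \lambda_m^{3/2}$ of the theorem. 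Theorem~\ref{Theorem 4} then produces the $m$ pairs $\pm u_1, \dots, \pm u_m$, and since $m$ may be taken as large as we wish once $\beta$ is large enough, the number of solutions tends to infinity as $\beta \to \infty$. Because the substantive analysis is entirely contained in Theorem~\ref{Theorem 4}, the only mildly delicate step here is the verification of \ref{A2}; everything else is the parity and monotonicity bookkeeping for \ref{A1} and the clean arithmetic reduction of \eqref{13}.
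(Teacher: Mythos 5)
Your proposal is correct and follows exactly the paper's (implicit) proof: the theorem is obtained by applying Theorem~\ref{Theorem 4} to problem \eqref{14}, using the displayed inequality $F(t) \ge \tfrac{\lambda\beta}{3}|t|^3$ as \eqref{12} with $s = 3$ and $\mu = \lambda\beta/3$, and checking that \eqref{13} then reduces to the stated lower bound on $\beta$; your arithmetic here matches the paper. The only difference is that you carefully verify \eqref{2}, \ref{A1}, \ref{A2}, and \eqref{11} for $h(t) = \lambda t\, e^{\beta|t|}$, steps the paper treats as routine (they follow the same pattern as its discussion of the model problem \eqref{6} with $\beta \ge 0$), and your verifications are sound.
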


As an example where the assumption \eqref{16} does not hold, consider the problem
\begin{equation} \label{15}
\left\{\begin{aligned}
- \Delta u & = (\lambda + \mu\, |u|)\, u\, e^{\alpha_0 u^2 - \beta |u|} && \text{in } \Omega\\[10pt]
u & = 0 && \text{on } \bdry{\Omega},
\end{aligned}\right.
\end{equation}
where $\alpha_0 > 0$, $0 < \lambda < \lambda_1$, $\beta > 0$, and $\mu > 0$. The corresponding function $h(t) = (\lambda + \mu\, |t|)\, t\, e^{- \beta |t|}$ satisfies \eqref{2} and is odd. The assumptions \ref{A1} and \ref{A2} are easily verified if $\beta < 2 \sqrt{2 \alpha_0}$ and $\mu \ge 2 \alpha_0 \lambda/(2 \sqrt{2 \alpha_0} - \beta)$. Theorem \ref{Theorem 1} does not give a nontrivial solution of this problem for any $\mu > 0$ since $t h(t) \to 0$ as $|t| \to \infty$. However, Theorem \ref{Theorem 4} gives the following multiplicity result for large $\mu \ge 2 \alpha_0 \lambda/(2 \sqrt{2 \alpha_0} - \beta)$ since
\[
\int_0^t (\lambda + \mu\, |s|)\, s\, e^{\alpha_0 s^2 - \beta |s|}\, ds \ge \frac{\mu e^{- \beta^2/4 \alpha_0}}{3}\, |t|^3 \quad \forall t \in \R.
\]

\begin{theorem}
If $0 < \lambda < \lambda_1$, $0 < \beta < 2 \sqrt{2 \alpha_0}$, and
\[
\mu > \max \set{\frac{2 \alpha_0 \lambda}{2 \sqrt{2 \alpha_0} - \beta}, \frac{e^{\beta^2/4 \alpha_0}}{2} \left(\frac{\alpha_0}{3 \pi} \vol{\Omega}\right)^{1/2} \lambda_m^{3/2}},
\]
then problem \eqref{15} has $m$ distinct pairs of nontrivial solutions $\pm u_1,\dots,\pm u_m$. In particular, the number of solutions goes to infinity as $\mu \to \infty$.
\end{theorem}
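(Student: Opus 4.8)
The plan is to recognize problem \eqref{15} as a special case of the setting of Theorem \ref{Theorem 4} with the exponent $s = 3$, and then to check that the stated lower bound on $\mu$ is precisely what \eqref{13} demands. Writing $h(t) = (\lambda + \mu\, |t|)\, t\, e^{- \beta |t|}$, the oddness of $h$ and the subcritical growth \eqref{2} are immediate. As already observed in the text, the structural conditions \ref{A1} and \ref{A2} hold exactly when $0 < \beta < 2 \sqrt{2 \alpha_0}$ and $\mu \ge 2 \alpha_0 \lambda/(2 \sqrt{2 \alpha_0} - \beta)$; the latter is the first entry in the maximum defining the admissible range of $\mu$, so it is covered by our hypothesis.

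First I would verify the two remaining hypotheses of Theorem \ref{Theorem 4}. For \eqref{11}, since $f(t) = h(t)\, e^{\alpha_0 t^2} = (\lambda + \mu\, |t|)\, t\, e^{\alpha_0 t^2 - \beta |t|}$ satisfies $f(t)/t \to \lambda$ as $t \to 0$, we obtain $2 F(t)/t^2 \to \lambda$, and the assumption $\lambda < \lambda_1$ yields $\limsup_{t \to 0} 2 F(t)/t^2 = \lambda < \lambda_1$. For \eqref{12}, I would invoke the lower bound displayed just before the statement, namely $F(t) \ge (\mu\, e^{- \beta^2/4 \alpha_0}/3)\, |t|^3$ for all $t \in \R$; this is exactly \eqref{12} with $s = 3$ and with the constant on its right-hand side equal to $\mu\, e^{- \beta^2/4 \alpha_0}/3$.

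It then remains to match the eigenvalue condition \eqref{13}. Setting $s = 3$ and using $\tfrac{1}{2} - \tfrac{1}{3} = \tfrac{1}{6}$ and $\tfrac{s}{2} - 1 = \tfrac{1}{2}$, the right-hand side of \eqref{13} becomes $\tfrac{1}{3} \left(\frac{\alpha_0}{12 \pi} \vol{\Omega}\right)^{1/2} \lambda_m^{3/2}$; requiring the constant $\mu\, e^{- \beta^2/4 \alpha_0}/3$ to exceed this and simplifying $(1/12)^{1/2} = \tfrac{1}{2}\, (1/3)^{1/2}$ gives precisely $\mu > \frac{e^{\beta^2/4 \alpha_0}}{2} \left(\frac{\alpha_0}{3 \pi} \vol{\Omega}\right)^{1/2} \lambda_m^{3/2}$, which is the second entry in the maximum. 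With every hypothesis of Theorem \ref{Theorem 4} now satisfied, that theorem supplies the $m$ distinct pairs $\pm u_1, \dots, \pm u_m$, and since $m$ may be taken arbitrarily large once $\mu$ is large enough, the number of solutions tends to infinity as $\mu \to \infty$. The computation is routine; the only point requiring care is bookkeeping the two distinct roles of $\mu$ — the problem parameter in \eqref{15} versus the coefficient constant appearing in \eqref{12} — so that the two competing lower bounds combine correctly into the single maximum in the statement.
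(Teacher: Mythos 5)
Your proposal is correct and matches the paper's own (largely implicit) argument: the paper derives this theorem from Theorem \ref{Theorem 4} exactly as you do, using the displayed lower bound $F(t) \ge \tfrac{\mu}{3}\, e^{-\beta^2/4\alpha_0}\, |t|^3$ as \eqref{12} with $s = 3$, the condition $\lambda < \lambda_1$ for \eqref{11}, and the stated restrictions on $\beta$ and $\mu$ for \ref{A1} and \ref{A2}. Your algebra converting \eqref{13} into the second entry of the maximum, including the careful distinction between the problem parameter $\mu$ and the coefficient in \eqref{12}, is exactly the intended computation.
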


The variational functionals associated with problems \eqref{1} and \eqref{3} satisfy the \PS{c} condition for all $c < 2 \pi/\alpha_0$ (see de Figueiredo et al.\! \cite[Proposition 2.1]{MR1386960}). The solutions of these problems given in Theorem \ref{Theorem 2} and Theorem \ref{Theorem 4} are critical points of the corresponding functional at critical levels below this threshold level for compactness. As we will see in the next section, the existence of these critical points is a consequence of an abstract critical point theorem recently proved in Perera \cite{Pe23} (see Theorem \ref{Theorem 3}).

\section{Proof of Theorem \ref{Theorem 2}}

Let us recall the definition of the $\Z_2$-cohomological index of Fadell and Rabinowitz \cite{MR0478189}. Let $W$ be a Banach space and let $\A$ denote the class of symmetric subsets of $W \setminus \set{0}$. For $A \in \A$, let $\overline{A} = A/\Z_2$ be the quotient space of $A$ with each $u$ and $-u$ identified, let $f : \overline{A} \to \RP^\infty$ be the classifying map of $\overline{A}$, and let $f^\ast : H^\ast(\RP^\infty) \to H^\ast(\overline{A})$ be the induced homomorphism of the Alexander-Spanier cohomology rings. The cohomological index of $A$ is defined by
\[
i(A) = \begin{cases}
\sup \set{m \ge 1 : f^\ast(\omega^{m-1}) \ne 0} & \text{if } A \ne \emptyset\\[5pt]
0 & \text{if } A = \emptyset,
\end{cases}
\]
where $\omega \in H^1(\RP^\infty)$ is the generator of the polynomial ring $H^\ast(\RP^\infty) = \Z_2[\omega]$. For example, the classifying map of the unit sphere $S^{m-1}$ in $\R^m,\, m \ge 1$ is the inclusion $\RP^{m-1} \incl \RP^\infty$, which induces isomorphisms on $H^q$ for $q \le m - 1$, so $i(S^{m-1}) = m$.

The cohomological index has the following so called piercing property, which is not shared by the Krasnoselskii’s genus. If $A, A_0, A_1 \in \A$ are closed and $\varphi : A \times [0,1] \to A_0 \cup A_1$ is a continuous map such that $\varphi(-u,t) = - \varphi(u,t)$ for all $(u,t) \in A \times [0,1]$, $\varphi(A \times [0,1])$ is closed, $\varphi(A \times \set{0}) \subset A_0$, and $\varphi(A \times \set{1}) \subset A_1$, then
\[
i(\varphi(A \times [0,1]) \cap A_0 \cap A_1) \ge i(A)
\]
(see \cite[Proposition (3.9)]{MR0478189}).

Now let $E \in C^1(W,\R)$ be an even functional, i.e., $E(-u) = E(u)$ for all $u \in E$. Assume that there exists $c^\ast > 0$ such that for all $c \in (0,c^\ast)$, $E$ satisfies the \PS{c} condition. Let $\A^\ast$ denote the class of symmetric subsets of $W$ and let $\Gamma$ denote the group of odd homeomorphisms of $W$ that are the identity outside the set $\set{u \in W : 0 < E(u) < c^\ast}$. For $r > 0$, the pseudo-index of $M \in \A^\ast$ related to $i$, $S_r = \set{u \in W : \norm{u} = r}$, and $\Gamma$ is defined by
\[
i^\ast(M) = \min_{\gamma \in \Gamma}\, i(\gamma(M) \cap S_r)
\]
(see Benci \cite{MR84c:58014}). Making essential use of the piercing property of the cohomological index, the following abstract critical point theorem was recently proved in Perera \cite{Pe23}.

\begin{theorem}[{\cite[Theorem 2.1]{Pe23}}] \label{Theorem 3}
Let $C$ be a compact symmetric subset of the unit sphere $S = \set{u \in W : \norm{u} = 1}$ with $i(C) = m \ge 1$. Assume that the origin is a strict local minimizer of $E$ and that there exists $R > 0$ such that
\begin{equation} \label{7}
\sup_{u \in A}\, E(u) \le 0, \qquad \sup_{u \in X}\, E(u) < c^\ast,
\end{equation}
where $A = \set{Ru : u \in C}$ and $X = \set{tu : u \in A,\, 0 \le t \le 1}$. Let $r \in (0,R)$ be so small that
\[
\inf_{u \in S_r}\, E(u) > 0,
\]
let $\A_j^\ast = \set{M \in \A^\ast : M \text{ is compact and } i^\ast(M) \ge j}$, and set
\[
c_j^\ast := \inf_{M \in \A_j^\ast}\, \sup_{u \in M}\, E(u), \quad j = 1,\dots,m.
\]
Then $0 < c_1^\ast \le \dotsb \le c_m^\ast < c^\ast$, each $c_j^\ast$ is a critical value of $E$, and $E$ has $m$ distinct pairs of associated critical points.
\end{theorem}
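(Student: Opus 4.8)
The plan is to run Benci's pseudo-index minimax scheme, but with the Fadell--Rabinowitz cohomological index in place of the genus, so that its piercing property can be exploited. Throughout I would use the standard axioms of $i$ proved in \cite{MR0478189}: monotonicity, subadditivity $i(A \cup B) \le i(A) + i(B)$, invariance under odd homeomorphisms, and the normalization $i(S^{m-1}) = m$. First I would record the three elementary properties of the pseudo-index that the argument needs. Monotonicity of $i^\ast$ is immediate from that of $i$. Since $\Gamma$ is a group and $\gamma \mapsto \gamma \circ \eta$ is a bijection of $\Gamma$ for each fixed $\eta \in \Gamma$, one gets the invariance $i^\ast(\eta(M)) = i^\ast(M)$ for all $\eta \in \Gamma$. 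Finally, from $M \subseteq \overline{M \setminus Z} \cup Z$, subadditivity of $i$, and $i(\gamma(Z) \cap S_r) \le i(Z)$, one obtains the estimate $i^\ast(\overline{M \setminus Z}) \ge i^\ast(M) - i(Z)$ for compact $M \in \A^\ast$ and closed $Z \in \A$. For the lower bound I note that $\mathrm{id} \in \Gamma$, so $i^\ast(M) \ge j \ge 1$ forces $i(M \cap S_r) \ge 1$ and hence $M \cap S_r \ne \emptyset$; every point of this intersection has $E \ge \inf_{S_r} E$, giving $c_j^\ast \ge \inf_{S_r} E > 0$. The chain $c_1^\ast \le \dotsb \le c_m^\ast$ is immediate from $\A_{j+1}^\ast \subseteq \A_j^\ast$.

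The heart of the matter is the upper bound $c_m^\ast < c^\ast$, and this is where the piercing property enters. I would show that the compact symmetric cone $X$ itself lies in $\A_m^\ast$, i.e. $i(\gamma(X) \cap S_r) \ge m$ for every $\gamma \in \Gamma$. Fix $\gamma$. On $A = RC$ one has $E \le 0$ by \eqref{7}, so $A$ lies outside $\set{0 < E < c^\ast}$ and $\gamma$ fixes $A$ pointwise; moreover $\gamma(0) = 0$ by oddness, so by continuity of $\gamma$ at the origin there is $\eps \in (0,r)$ with $\gamma(\set{0 < \norm{u} \le \eps}) \subseteq A_0 := \set{u : 0 < \norm{u} \le r}$. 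Put $A_1 = \set{u : \norm{u} \ge r}$, so that $A_0, A_1 \in \A$ are closed and $A_0 \cap A_1 = S_r$. Then $\varphi : C \times [0,1] \to A_0 \cup A_1$, $\varphi(u,t) = \gamma\seq{(\eps + t(R - \eps))\, u}$, is continuous, odd in $u$, avoids the origin (since $\gamma$ maps nonzero vectors to nonzero vectors), has compact hence closed image, and sends $C \times \set{0}$ into $A_0$ and $C \times \set{1}$ into $A_1$ (because $\gamma(Ru) = Ru$ has norm $R > r$). The piercing property now gives $i(\varphi(C \times [0,1]) \cap S_r) \ge i(C) = m$, and since $\varphi(C \times [0,1]) \subseteq \gamma(X)$ we conclude $i(\gamma(X) \cap S_r) \ge m$. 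Hence $X \in \A_m^\ast$ and $c_m^\ast \le \sup_X E < c^\ast$ by \eqref{7}.

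It remains to show each $c_j^\ast$ is a critical value and to count pairs. Because $E$ is even and of class $C^1$ and satisfies \PS{c} for every $c \in (0, c^\ast)$, the equivariant first deformation lemma applies at each level $c = c_j^\ast \in (0, c^\ast)$: writing $K_c$ for the critical set at level $c$, for a suitably small symmetric neighborhood $U$ of $K_c$ there is an odd deformation $\eta$ supported in $\set{|E - c| < \delta} \subset \set{0 < E < c^\ast}$, so that $\eta \in \Gamma$, with $\eta(\set{E \le c + \eps} \setminus U) \subseteq \set{E \le c - \eps}$. If $c_j^\ast$ were regular ($K_c = \emptyset$, $U = \emptyset$), then choosing $M \in \A_j^\ast$ with $\sup_M E < c + \eps$ and applying $\eta$ gives $\eta(M) \in \A_j^\ast$ by the $\Gamma$-invariance of $i^\ast$, with $\sup_{\eta(M)} E \le c - \eps$, contradicting the definition of $c_j^\ast$; thus each $c_j^\ast$ is critical. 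When $c := c_j^\ast = \dotsb = c_{j+p}^\ast$, the same deformation together with the estimate $i^\ast(\overline{M \setminus U}) \ge i^\ast(M) - i(\overline{U})$ forces $i(K_c) \ge p + 1$, so a repeated value carries infinitely many critical points. Counting distinct values with multiplicity then produces at least $m$ pairs $\pm u_1, \dots, \pm u_m$, all nontrivial since $E(\pm u_j) = c_j^\ast > 0 = E(0)$.

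The main obstacle is the cone estimate $i^\ast(X) \ge m$: it is precisely here that the cohomological index is indispensable, since the genus lacks the piercing property, and the one delicate point is handling the vertex of the cone, which I would resolve by the truncation at radius $\eps$ above, using only that $\gamma$ is a homeomorphism fixing the origin. Everything else is the standard pseudo-index machinery.
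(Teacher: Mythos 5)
Your proposal is correct and follows essentially the same route as the cited source \cite{Pe23}: Benci's pseudo-index scheme built on the Fadell--Rabinowitz index, with the crucial cone estimate $i^\ast(X) \ge m$ obtained from the piercing property by noting that every $\gamma \in \Gamma$ fixes $A$ pointwise (since $E \le 0$ there) and fixes the origin, and truncating the cone at a small radius $\eps$ chosen by continuity of $\gamma$ at $0$ so that the deformed annular cone pierces $S_r$. The remaining ingredients --- positivity of $c_1^\ast$ via $\mathrm{id} \in \Gamma$, the bound $c_m^\ast \le \sup_X E < c^\ast$, and the equivariant deformation argument with the subadditivity estimate $i^\ast(\overline{M \setminus U}) \ge i^\ast(M) - i(\overline{U})$ for criticality and multiplicity --- are exactly the standard pseudo-index machinery the paper invokes.
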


We now use this result to prove Theorem \ref{Theorem 2} and Theorem \ref{Theorem 4}.

\begin{proof}[Proof of Theorem \ref{Theorem 2}]
Since the variational functional
\[
E(u) = \frac{1}{2} \int_\Omega |\nabla u|^2\, dx - \mu \int_\Omega F(u)\, dx, \quad u \in H^1_0(\Omega)
\]
associated with problem \eqref{3} satisfies the \PS{c} condition for all $c < 2 \pi/\alpha_0$ by de Figueiredo et al.\! \cite[Proposition 2.1]{MR1386960}, we apply Theorem \ref{Theorem 3} with $c^\ast = 2 \pi/\alpha_0$. Clearly, \eqref{5} implies that the origin is a strict local minimizer of $E$.

Let $\lambda_1 < \lambda_2 \le \lambda_3 \le \cdots$ be the sequence of Dirichlet eigenvalues of $- \Delta$ in $\Omega$, repeated according to multiplicity, let $E_j$ denote the eigenspace of $\lambda_j$, and let
\[
N_m = \bigoplus_{j=1}^m E_j.
\]
By increasing $m$ if necessary, we may assume that $\lambda_m < \lambda_{m+1}$. Then the unit sphere $C$ in $N_m$ is a compact symmetric set of index $m$. Let $R > 0$ and let $A$ and $X$ be as in Theorem \ref{Theorem 3}. Integrating the inequality in \ref{A2} gives
\begin{equation} \label{8}
F(t) \ge F(t_0)\, e^{(|t| - t_0)/M} \quad \text{for } |t| \ge t_0.
\end{equation}
Since $F(t_0) > 0$ and $N_m$ is finite dimensional, this implies that $\sup E(A) \to - \infty$ as $R \to \infty$, so the first inequality in \eqref{7} holds if $R$ is sufficiently large.

For $u \in X \subset N_m$,
\begin{equation} \label{9}
E(u) \le \frac{\lambda_m}{2} \int_\Omega u^2\, dx - \mu \int_\Omega F(u)\, dx.
\end{equation}
By \eqref{8} and the first inequality in \ref{A1}, for any $\eps > 0$, there exists $c_\eps > 0$ such that
\begin{equation} \label{10}
F(t) \ge c_\eps\, |t|^3 \quad \text{for } |t| \ge \eps.
\end{equation}
We have
\[
\int_\Omega u^2\, dx = \int_{\set{|u| < \eps}} u^2\, dx + \int_{\set{|u| \ge \eps}} u^2\, dx \le \eps^2 \vol{\Omega} + \vol{\Omega}^{1/3} \left(\int_{\set{|u| \ge \eps}} |u|^3\, dx\right)^{2/3}
\]
by the H\"{o}lder inequality and
\[
\int_\Omega F(u)\, dx \ge \int_{\set{|u| \ge \eps}} F(u)\, dx \ge c_\eps \int_{\set{|u| \ge \eps}} |u|^3\, dx
\]
by \eqref{10}. So \eqref{9} gives
\[
E(u) \le \frac{\lambda_m}{2} \left(\eps^2 \vol{\Omega} + \vol{\Omega}^{1/3} \tau^2\right) - \mu c_\eps \tau^3 \quad \forall u \in X,
\]
where $\tau = \left(\int_{\set{|u| \ge \eps}} |u|^3\, dx\right)^{1/3}$. Maximizing the last expression over all $\tau \ge 0$ gives
\[
\sup_{u \in X}\, E(u) \le \frac{\lambda_m}{2} \left(\eps^2 + \frac{\lambda_m^2}{27 \mu^2 c_\eps^2}\right) \vol{\Omega},
\]
so the second inequality in \eqref{7} holds if $\eps$ is sufficiently small and $\mu$ is sufficiently large. Theorem \ref{Theorem 3} now gives $m$ distinct pairs of nontrivial critical points of $E$.
\end{proof}

\begin{proof}[Proof of Theorem \ref{Theorem 4}]
The proof is similar to that of Theorem \ref{Theorem 2}, so we will be sketchy. The variational functional
\[
E(u) = \frac{1}{2} \int_\Omega |\nabla u|^2\, dx - \int_\Omega F(u)\, dx, \quad u \in H^1_0(\Omega)
\]
associated with problem \eqref{1} satisfies the \PS{c} condition for all $c < 2 \pi/\alpha_0$ (see de Figueiredo et al.\! \cite[Proposition 2.1]{MR1386960}), so we apply Theorem \ref{Theorem 3} with $c^\ast = 2 \pi/\alpha_0$. Let $N_m$ and $C$ be as in the proof of Theorem \ref{Theorem 2}, let $R > 0$, and let $A$ and $X$ be as in Theorem \ref{Theorem 3}. As before, the first inequality in \eqref{7} holds if $R$ is sufficiently large. For $u \in X \subset N_m$, by \eqref{12} and the H\"{o}lder inequality,
\[
E(u) \le \frac{\lambda_m}{2} \int_\Omega u^2\, dx - \mu \int_\Omega |u|^s\, dx \le \frac{\lambda_m}{2} \vol{\Omega}^{1 - 2/s} \tau^2 - \mu \tau^s,
\]
where $\tau = \left(\int_\Omega |u|^s\, dx\right)^{1/s}$. Maximizing the last expression over all $\tau \ge 0$ gives
\[
\sup_{u \in X}\, E(u) \le \left(\frac{1}{2} - \frac{1}{s}\right) \bigg(\frac{\lambda_m^{s/2}}{\mu s}\bigg)^{2/(s - 2)} \vol{\Omega},
\]
which is less than $2 \pi/\alpha_0$ if \eqref{13} holds.
\end{proof}

\def\cprime{$''$}

\end{document}